\newcommand{\R}{{\mathbb{R}}}
\newtheorem{theorem}{Theorem}
\newtheorem{proposition}{Proposition}
\newcommand{\graph}{{\mathcal{G}}}
\newcommand{\verts}{{\mathcal{V}}}
\newcommand{\edges}[1][]{{\mathcal{E}}}
\newcommand{\edgein}{{\mathcal E}_+}
\newcommand{\edgeout}{{\mathcal E}_{-}}
\newcommand{\spn}{\operatorname{span}}
\begin{document}
\title{Finiteness of the H\"{o}lder-Brascamp-Lieb Constant Revisited}
\author{Philip T. Gressman\footnote{Partially supported by NSF grant DMS-2348384.}}
\maketitle
\abstract{Abstract H\"{o}lder-Brascamp-Lieb inequalities have become a ubiquitous tool in Fourier analysis in recent years, due in large part to a theorem of Bennett, Carbery, Christ, and Tao \cites{bcct2008,bcct2010} characterizing finiteness of the H\"{o}lder-Brascamp-Lieb constant. Here we provide a new characterization of a substantially different nature involving directed graphs of subspaces. Its practical value derives from its complementary nature to the Bennett, Carbery, Christ, and Tao conditions: it creates a means by which one can establish finiteness of the H\"{o}lder-Brascamp-Lieb constant by analysis of a well-chosen, finite list of subspaces rather than by checking conditions on all subspaces of the underlying vector space. The proof is elementary and is essentially an ``explicitization'' of the semi-explicit factorization algorithm of Carbery, H\"{a}nninen, and Valdimarsson \cite{chv2023}.}

\section{Introduction}
Suppose that $H$ is some finite-dimensional real Hilbert space and that $\{\pi_i\}_{i=1}^N$ is a collection of real linear maps with each $\pi_i$ taking values in some real Hilbert space $H_i$ for $i=1,\ldots,N$. Suppose also that $\{\tau_i\}_{i=1}^N$ is a collection of real numbers in the interval $[0,1]$. The full collection of objects $(H,\{\pi_i\}_{i=1}^N,\{\tau_i\}_{i=1}^N)$ is called H\"{o}lder-Brascamp-Lieb data. Given such data, one commonly seeks to determine whether there exists a finite constant $C$  such that the inequality
\begin{equation} \int_{H} \prod_{i=1}^N [f_i(\pi_i(x))]^{\tau_i} dx \leq C \prod_{i=1}^N \left[ \int_{\pi_i(H)} f_i(x_i) dx_i \right]^{\tau_i} \label{brascamplieb} \end{equation}
holds for all nonnegative Lebesgue-measurable functions $f_i$ on $\pi_i(H)$. This general family of inequalities is immensely important in analysis, containing within it, for example, H\"{o}lder's inequality, Young's inequality for convolutions, and the Loomis-Whitney inequality. 

Lieb \cite{lieb1990} established that the optimal value of $C$ can be determined by testing the inequality on the family of centered Gaussian functions. Because the underlying family is noncompact, however, it can be challenging to carry out Lieb's computation explicitly, especially in cases of low symmetry. Thus, it is useful to have simpler criteria by which one may answer the more basic question of whether \textit{any} finite $C$ exists satisfying \eqref{brascamplieb}. The finiteness question was first answered in influential work of Bennett, Carbery, Christ, and Tao \cites{bcct2008,bcct2010} which generalized special cases studied by Carlen, Lieb, and Loss \cite{cll2004} and others. In particular, they established that $C$ is finite if and only if two conditions hold. First, every subspace $V$ of $H$ must satisfy the inequality
\begin{equation} \dim V \leq \sum_{i=1}^N \tau_i \dim \pi_i(V). \label{bcct1} \end{equation}
Second, when $V = H$,
\begin{equation} \dim H =  \sum_{i=1}^N \tau_i \dim \pi_i(H). \label{bcct2} \end{equation}
This characterization of finiteness, along with various special cases and closely-related results, has played a role in an array of results from the past fifteen years \cites{bct2006,bdg2016,gozz2023,bt2024,mo2024,gz2020}.

By connecting Lieb's formula to Geometric Invariant Theory, it is possible to deduce that there must be alternate characterizations of the finiteness of the H\"{o}lder-Brascamp-Lieb constant which are of a very different nature than the Bennett-Carbery-Christ-Tao conditions; see \cite{gressman2021}*{Lemma 2}. Whereas \eqref{bcct1} must be systematically verified for all subspaces $V$ to deduce finiteness of the H\"{o}lder-Brascamp-Lieb constant, algebraic approaches can establish finiteness of $C$ when one is able to cleverly construct just one invariant polynomial which does not annihilate the H\"{o}lder-Brascamp-Lieb data. (And conversely: a single, well-chosen subspace $V$ violating \eqref{bcct1} is enough to prove that the H\"{o}lder-Brascamp-Lieb constant is infinite, while algebraic methods require verifying that all invariant polynomials annihilate the H\"{o}lder-Brascamp-Lieb maps.) Such an alternate characterization is valuable from the pragmatic standpoint of reducing the labor involved in validating \eqref{brascamplieb}. The chief drawback of the approach \cite{gressman2021} is simply that it gives no genuine understanding of how to find invariants. The present paper gives one way to address this deficiency by building a descriptive framework for a family of explicit computations. The result is a characterization of the finiteness of the H\"{o}lder-Brascamp-Lieb constant which is complementary to the existing one.

The basic computational unit of this new characterization is given in terms of certain highly-structured directed graphs.
Supposing, as above, that $H$ is some finite-dimensional real Hilbert space, a \textit{graph decomposition} of $H$ is any directed graph $\graph$ whose vertex set $\verts(\graph)$ consists of subspaces of $H$, where at minimum both $\{0\}$ and $H$ belong to $\verts(\graph)$, and whose edge set $\edges(\graph)$ satisfies the the following constraints:
\begin{enumerate}
\item For every edge $e \in \edges$ which is outgoing from $V_1$ and incoming to $V_2$, $V_1 \subset V_2$ and $\dim V_2 = \dim V_1 + 1$.
\item Every vertex other than $\{0\}$ has at least one incoming edge.
\item Every vertex other than $H$ has at least one outgoing edge.
\end{enumerate}
A scalar- or vector-valued function $\varphi$ on the edge set $\edges(\graph)$ is called \textit{balanced} when for all vertices $V \in \verts$ which are not equal to $H$ or $\{0\}$, the sum of $\varphi(e)$ over all edges $e$ incoming to $V$ equals the sum of $\varphi(e')$ over all edges $e'$ outgoing from $V$.
The \textit{total mass} of $\varphi$ is defined to be sum of $\varphi(e)$ over all edges $e$ outgoing from $\{0\}$.
Given a collection of maps $\{\pi_i\}_{i=1}^N$, a graph decomposition $\graph$ of $H$, and nonnegative balanced weights $\theta_1,\ldots,\theta_N$ on $\graph$ such that the total mass of $\theta_i$ is $\tau_i$ for each $i$, we define the \textit{summary weight} $\sigma$  on each edge $e$ as the sum of $\theta_i(e)$ over all $i$ such that the endpoints of $e$ project via $\pi_i$ to unequal subspaces. When the summary weights $\sigma$ are balanced with total mass $1$, we say that the collection $(\graph,\{\theta_i\}_{i=1}^N)$ is a \textit{valid presentation} of the H\"{o}lder-Brascamp-Lieb data $(H,\{\pi_i\}_{i=1}^N, \{\tau_i\}_{i=1}^N)$.
The main theorem is stated as follows.
\begin{theorem}
For given data $(H,\{\pi_i\}_{i=1}^N, \{\tau_i\}_{i=1}^N)$, the H\"{o}lder-Brascamp-Lieb inequality \eqref{brascamplieb} holds for some finite constant $C$ and all nonnegative measurable functions $\{f_i\}_{i=1}^N$ if and only if there exists $(\graph,\{\theta_i\}_{i=1}^N)$ which is a valid presentation of the data. \label{mainthm}
\end{theorem}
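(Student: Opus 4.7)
The plan is to route the argument through the Bennett--Carbery--Christ--Tao characterization: I will show that the existence of a valid presentation is equivalent to \eqref{bcct1} and \eqref{bcct2}. The forward implication is a short direct computation, while the reverse implication will proceed by induction on $\dim H$, following the factorization scheme of Carbery, H\"anninen, and Valdimarsson.

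\textbf{Forward direction.} Given a valid presentation $(\graph,\{\theta_i\})$ and an arbitrary subspace $W \subseteq H$, define, for each edge $e$ going from $V_1$ to $V_2$, the integers
\[
\iota_W(e) := \dim(V_2 \cap W) - \dim(V_1 \cap W), \qquad \iota_W^{(i)}(e) := \dim \pi_i(V_2 \cap W) - \dim \pi_i(V_1 \cap W),
\]
both of which lie in $\{0,1\}$ by condition~(1). Since every nonnegative balanced weight with source $\{0\}$ and sink $H$ decomposes as a nonnegative combination of unit indicators of directed paths $\{0\} \to H$, and each such path realizes a full flag along which both $\iota_W$ and $\iota_W^{(i)}$ telescope, one obtains $\sum_e \iota_W(e) \sigma(e) = \dim W$ and $\sum_e \iota_W^{(i)}(e) \theta_i(e) = \tau_i \dim \pi_i(W)$. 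A one-line linear-algebra check yields the edgewise comparison
\[
\iota_W(e) \cdot \ind[i \in S(e)] \le \iota_W^{(i)}(e), \qquad S(e) := \{i : \pi_i(V_1) \ne \pi_i(V_2)\},
\]
since if $v \in V_2 \cap W$ spans $V_2/V_1$ and $i \in S(e)$, then $\pi_i(v) \notin \pi_i(V_1) \supseteq \pi_i(V_1 \cap W)$, so $\pi_i(v)$ witnesses $\iota_W^{(i)}(e) = 1$. Multiplying by $\theta_i(e)$, summing over $e$ and $i$, and substituting $\sigma(e) = \sum_{i \in S(e)} \theta_i(e)$ yields $\dim W \le \sum_i \tau_i \dim \pi_i(W)$, which is \eqref{bcct1}. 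The case $W = H$ turns the edgewise comparison into an equality, producing \eqref{bcct2}.

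\textbf{Reverse direction.} Assume \eqref{bcct1} and \eqref{bcct2} and induct on $\dim H$. If some proper nontrivial subspace $V^* \subsetneq H$ saturates \eqref{bcct1}, then the restricted data $(V^*, \{\pi_i|_{V^*}\}, \{\tau_i\})$ and the quotient data $(H/V^*, \{\bar\pi_i\}, \{\tau_i\})$, with $\bar\pi_i : H/V^* \to \pi_i(H)/\pi_i(V^*)$, both satisfy BCCT (a direct computation using the equality at $V^*$). The inductive hypothesis yields valid presentations $(\graph_k, \{\theta_i^{(k)}\})$ for $k = 1, 2$; pulling the vertices of $\graph_2$ back to subspaces of $H$ containing $V^*$, gluing along the shared vertex $V^*$, and concatenating the weights produces a valid presentation of $(H, \{\pi_i\}, \{\tau_i\})$, since the inflow and outflow of each $\theta_i$ at $V^*$ both equal $\tau_i$, and $\sigma$ is balanced with total mass $1$ as it has total mass $1$ on each piece.

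\textbf{Main obstacle.} The remaining case, in which only $\{0\}$ and $H$ saturate \eqref{bcct1}, is the substantive content of the proof: no vertex-splitting is available, and the graph must genuinely branch near $\{0\}$. Following the CHV algorithm, I would express $(\tau_i)$ as a convex combination of exponent vectors that each lie on a proper face of the BCCT polytope --- each such face admits a proper saturating subspace and hence, by the previous case, a valid presentation --- and combine the resulting presentations into a single branching graph with suitably weighted edges. Ensuring that the combined summary weight is balanced with total mass $1$ while respecting the edge-level visibility conditions on $\sigma$ across branches is the technical heart of the argument. This is precisely the explicitization of CHV's semi-explicit factorization referred to in the abstract.
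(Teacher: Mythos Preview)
Your forward direction is correct and genuinely different from the paper's. You prove ``valid presentation $\Rightarrow$ BCCT conditions'' by a clean combinatorial telescoping argument, then invoke the Bennett--Carbery--Christ--Tao theorem to conclude finiteness. The paper instead proves ``valid presentation $\Rightarrow$ finite $C$'' \emph{directly} via an explicit factorization (Propositions~\ref{factorprop} and~\ref{integralprop}), never passing through \eqref{bcct1}--\eqref{bcct2} in that direction. The tradeoff: your route is shorter and more elementary (it is, in fact, precisely the ``direct linear-algebra equivalence'' the paper speculates about but deliberately does not pursue), whereas the paper's factorization yields the explicit constant \eqref{boundvalue}, which your argument cannot see.

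Your reverse direction is the paper's argument, but your treatment of the ``main obstacle'' is where the gap lies. The claim ``each proper face of the BCCT polytope admits a proper saturating subspace'' is not automatic: a face may be cut out by $\tau_i = 0$ or $\tau_i = 1$ rather than by a subspace constraint. The paper closes this gap by passing not to faces but to \emph{extreme points}, and then doing a short case analysis: if some $\tau_i = 1$ with $\dim\pi_i(H)>0$, then $V = \pi_i^{-1}(W)$ for any hyperplane $W \subset \pi_i(H)$ is critical; if all nontrivial $\tau_i$ vanish, \eqref{bcct1}--\eqref{bcct2} fail; otherwise at least two indices have $\tau_i \in (0,1)$, forcing at least two subspace constraints to be tight at the extreme point, one of which is proper. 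Once every extreme point has a critical subspace, your split-and-glue plus convexity finishes exactly as you describe. So your plan is sound, but the step you flag as the ``technical heart'' is in fact a brief case analysis rather than a delicate branching construction.
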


The proof of Theorem \ref{mainthm} will be accomplished via an elementary factorization argument. As part of their remarkable theory of multilinear duality, Carbery, H\"{a}nninen, and Valdimarsson \cite{chv2023} observed that factorization methods can be used to prove H\"{o}lder-Brascamp-Lieb inequalities when the conditions \eqref{bcct1} and \eqref{bcct2} are known to hold. The approach they use is to inductively develop ``semi-explicit'' factorizations by reducing matters to critical subspaces and interpolation. The factorization produced by their algorithm is (essentially by necessity) essentially the same as the factorization used here. The main distinction between the algorithm of Carbery, H\"{a}nninen, and Valdimarsson and Theorem \ref{mainthm} is that here the construction of the factorization is justified by quite different reasoning which avoids the need for \textit{a priori} validation of \eqref{bcct1} and \eqref{bcct2}. The organization of the proof of Theorem \ref{mainthm} is also (again, by necessity) somewhat different than their approach as well, since in the main direction (proving finiteness of the H\"{o}lder-Brascamp-Lieb constant), critical subspaces play no distinguished role in the new argument. Critical subspaces do, however, play an important role in the direction of showing that the Bennett-Carbery-Christ-Tao conditions imply the existence of a valid presentation $(\graph,\{\theta_i\}_{i=1}^N)$, and it is in this  direction that one sees most clearly the familiar structural properties of H\"{o}lder-Brascamp-Lieb inequalities which are exploited by both Bennet, Carbery, Christ, and Tao \cite{bcct2010} and Carbery, H\"{a}nninen, and Valdimarsson.

Stated differently, the novel content of Theorem \ref{mainthm} is that the Bennett-Carbery-Christ-Tao conditions \eqref{bcct1}, \eqref{bcct2} are equivalent to the existence of a valid presentation $(\graph,\{\theta_i\}_{i=1}^N)$. It seems likely possible to establish this equivalence directly by methods which only make reference to elementary linear algebra, but powerful factorization ideas make it easier to establish the equivalence by taking a ``analytic detour'' reproving the H\"{o}lder-Brascamp-Lieb inequalities.

As a simple example, consider the application of Theorem \ref{mainthm} to the Loomis-Whitney inequality. Here we have an underlying Hilbert space $H := \R^{d+1}$ and maps $\pi_1,\ldots,\pi_{d+1} : \R^{d+1} \rightarrow \R^{d}$ given by
$\pi_i(x) := (x_1,\ldots,\widehat{x_i},\ldots,x_{d+1})$,
where $x := (x_1,\ldots,x_{d+1})$ and $\widehat{\cdot}$ denotes omission. Let $e_1,\ldots,e_{d+1}$ denote the standard basis vectors in $\R^d$. Fixing $V_0 := \{0\}$ and $V_i := \spn \{e_1,\ldots,e_i\}$ for $i=1,\ldots,d+1$, we may build a graph decomposition $\graph$ of $\R^{d+1}$ by joining $V_{i-1}$ to $V_{i}$ by a directed edge (outgoing from $V_{i-1}$ and incoming to $V_i$) for each $i=1,\ldots,d+1$. The balanced weights $\theta_1,\ldots,\theta_{d+1}$ are simply equal to $1/d$ on each edge. The summary weight $\sigma$ equals $1$ on each edge because $V_{i-1}$ and $V_i$ project to the same space via $\pi_{i'}$ if and only if $i = i'$. This information can be conveniently summarized via the following diagram.
\begin{center}
\begin{tikzpicture}[scale=0.865]
\tikzset{vertex/.style={}}
\tikzset{edge/.style={->,>=latex'}}
\node[vertex] (a) at (0,0) {$\{0\}$};
\node[vertex] (b) at (2.2,0) {$V_1$};
\node[vertex] (c) at (4.4,0) {$V_2$};
\node[vertex] (d) at (6.6,0) {$\cdots$};
\node[vertex] (e) at (8.8,0) {$V_{d}$};
\node[vertex] (f) at (11,0) {$\R^{d+1}$};
\draw[edge] (a)--(b) node [midway,above] {$(\frac{1}{d},\underline{\frac{1}{d}},\ldots,\underline{\frac{1}{d}})$};
\draw[edge] (b)--(c) node [midway,above] {$(\underline{\frac{1}{d}},\frac{1}{d},\ldots,\underline{\frac{1}{d}})$};
\draw[edge] (c)--(d) node [midway,above] {$\cdots$};
\draw[edge] (d)--(e) node [midway,above] {$\cdots$};
\draw[edge] (e)--(f) node [midway,above] {$(\underline{\frac{1}{d}},\underline{\frac{1}{d}},\ldots,{\frac{1}{d}})$};
\end{tikzpicture}
\end{center}
Here the labels on each edge are the weights $(\theta_1,\ldots,\theta_{d+1})$. Individual entries are underlined when for the corresponding value of $i'$, $\pi_{i'}$ projects the endpoints of that edge to unequal subspaces. Given such a diagram, verifying that it describes a valid presentation involves only simple and explicit computations.

For a more elaborate example, consider the following collection of maps on $\R^6$, where $x := (x_1,\ldots,x_6)$:
\begin{align*}
\pi_1(x) & := (x_1,x_2,x_5), \\
\pi_2(x) & := (x_2,x_3,x_5+x_6), \\
\pi_3(x) & := (x_4,x_6), \\
\pi_4(x) & := (x_1,x_3,x_4,x_5-x_6).
\end{align*}
Let $\{e_i\}_{i=1}^6$ be the standard basis vectors of $\R^6$ and consider the subspaces
\begin{align*}
V_i & := \spn \{e_1,\ldots,e_i\}, \ i = 1,\ldots,5,\\
V_6 & := \spn \{e_1,e_2,e_3,e_4,e_5 + e_6\}.
\end{align*}
Defining $\tau_1 := \tau_2 := \tau_3 := \tau_4 := 1/2$, the data $(\R^6,\{\pi_i\}_{i=1}^4,\{\tau_i\}_{i=1}^4)$ has a valid presentation which is summarized in the following diagram.
\begin{center}
\begin{tikzpicture}[scale=0.885]
\tikzset{vertex/.style={}}
\tikzset{edge/.style={->,>=latex'}}
\node[vertex] (a) at (0,0) {$\{0\}$};
\node[vertex] (b) at (2,0) {$V_1$};
\node[vertex] (c) at (4,0) {$V_2$};
\node[vertex] (d) at (6,0) {$V_3$};
\node[vertex] (e) at (8,0) {$V_4$};
\node[vertex] (f) at (9.414,1.414) {$V_5$};
\node[vertex] (g) at (9.414,-1.414) {$V_6$};
\node[vertex] (h) at (10.828,0) {$\R^6$};
\draw[edge] (a)--(b) node [midway, above] () {$(\underline{\frac{1}{2}},\frac{1}{2},\frac{1}{2},\underline{\frac{1}{2}})$};
\draw[edge] (b)--(c) node [midway, above] () {$(\underline{\frac{1}{2}},\underline{\frac{1}{2}},\frac{1}{2},\frac{1}{2})$};
\draw[edge] (c)--(d) node [midway, above] () {$(\frac{1}{2},\underline{\frac{1}{2}},\frac{1}{2},\underline{\frac{1}{2}})$};
\draw[edge] (d)--(e) node [midway, above] () {$(\frac{1}{2},\frac{1}{2},\underline{\frac{1}{2}},\underline{\frac{1}{2}})$};
\draw[edge] (e)--(f) node [midway, above,sloped] () {${ }\, (\underline{\frac{1}{2}},\underline{0},\frac{1}{2},\underline{0})$};
\draw[edge] (e)--(g) node [midway,below,sloped] () {$(\underline{0},\underline{\frac{1}{2}},\underline{0},\frac{1}{2})$};
\draw[edge] (f)--(h) node [midway,above,sloped] () {$(\frac{1}{2},0,\underline{\frac{1}{2}},0)$};
\draw[edge] (g)--(h)  node [midway,below,sloped] () {$(0,\frac{1}{2},0,\underline{\frac{1}{2}})$};
\end{tikzpicture}
\end{center}
The balanced weights $\theta_1,\theta_2,\theta_3,\theta_4$ are listed (in order) along every edge, and the summary weight $\sigma$ on each edge is simply the sum of the underlined values for each edge.

In general, one should not expect that valid presentations for a particular collection of H\"{o}lder-Brascamp-Lieb data are unique (when they exist). The above diagram, for example, remains a valid presentation when the spaces $V_i$ are redefined in such a way that they increase in directions five and six before expanding in directions $e_1,\ldots,e_4$; this puts the diamond structure at the start of the graph rather than the end. It is natural to wonder if this is an indication that some simple or more canonical presentation is possible. Would it be possible, for example, to always build a valid presentation from a graph decomposition $\graph$ which is simply connected, as was the case for the Loomis-Whitney example? This example shows that the answer is ``no.'' This particular H\"{o}lder-Brascamp-Lieb inequality has no valid presentation of that form, and moreover the underlying H\"{o}lder-Brascamp-Lieb inequality cannot be deduced via interpolation from inequalities with such presentations.

Consider interpolation. Taking each $\tau_i$ to equal $1/2$ is in fact the only collection of exponents for these maps which give a finite value of the corresponding H\"{o}lder-Brascamp-Lieb constant, so no interpolation of any kind would be possible. To see this, suppose this ensemble of maps satisfies a H\"{o}lder-Brascamp-Lieb inequality for some unknown exponents $\tau_1,\tau_2,\tau_3,\tau_4$.   By the theorem of Bennett, Carbery, Christ, and Tao, we have $1 \leq \dim \spn \{e_1\} \leq \tau_1 + \tau_4$.  Likewise, by analyzing the projections of $e_2,e_3,e_4$, it must be true that $1 \leq \min \{\tau_1+\tau_4,\tau_1 + \tau_2, \tau_2 + \tau_4, \tau_3 + \tau_4\}$. But furthermore, \eqref{bcct2} implies
\begin{align*}
6 & = \dim \R^6 = 3 (\tau_1 + \tau_2) + 2 \tau_3 + 4 \tau_4 \\
 & = 2(\tau_1 + \tau_2) + (\tau_1 + \tau_4) + (\tau_2 + \tau_4) + 2 (\tau_3 + \tau_4).
 \end{align*}
From this, one sees that $\tau_1 + \tau_2 = \tau_1 + \tau_4 = \tau_2 + \tau_4 = \tau_3 + \tau_4 = 1$. By subtracting these identities from one another, it follows that $\tau_2 = \tau_4$, $\tau_1 = \tau_2$, $\tau_2 = \tau_3$, which forces $\tau_1 = \tau_2 = \tau_3 = \tau_4 = 1/2$.

Suppose now that for the given data, there is some valid presentation $(\graph,\{\theta_i\}_{i=1}^4)$ where $\graph$ has vertices $V_0',V_1',\ldots,V_6'$ with $V_0' := \{0\}$ and $V_6' := \R^6$ and its only edges are those edges joining $V_{i-1}'$ to $V_{i}'$ for $i=1,\ldots,6$. Because $\theta_1,\theta_2,\theta_3,\theta_4$ are balanced with total mass $1/2$, each must simply be constant and equal to $1/2$ on every edge. The summary weight $\sigma$ is also balanced and has total mass $1$, which must mean that on each edge, there are exactly two indices $i_1$, $i_2$ such that $\pi_{i_1}$ and $\pi_{i_2}$ project the endpoints of the edge to distinct subspaces. As a consequence, each edge also has two indices $i_1'$ and $i_2'$ such that both $\pi_{i_1'}$ and $\pi_{i_2'}$ project the endpoints of the edge to the same space.  Suppose that it is known for some $i$ that $V_i'$ is the span of some subcollection of $e_1,e_2,e_3,e_4$ (and note this is vacuously the case for $V_0'$). One can easily check that
\begin{align*}
(V_i' + \ker \pi_1) \cap (V_i' + \ker \pi_2) & = V_i' + \spn \{e_4\}, \\
(V_i' + \ker \pi_1) \cap (V_i' + \ker \pi_3) & = V_i' + \spn \{e_3\}, \\
(V_i' + \ker \pi_1) \cap (V_i' + \ker \pi_4) & = V_i', \\
(V_i' + \ker \pi_2) \cap (V_i' + \ker \pi_3) & = V_i' + \spn \{e_1\}, \\
(V_i' + \ker \pi_2) \cap (V_i' + \ker \pi_4) & = V_i', \\
(V_i' + \ker \pi_3) \cap (V_i' + \ker \pi_4) & = V_i' + \spn \{e_2\}.
\end{align*}
Assuming $i < 6$, let $v_{i+1}'$ be any vector in $V_{i+1}' \setminus V_{i}$. Since there must be exactly two indices $i_1'$ and $i_2'$ such that $\pi_{i'_1}$ and $\pi_{i_2'}$ project $V_i'$ and $V_{i+1}'$ to the same subspace, it must be the case that $v_{i+1}' + V_{i}'$ intersects $\ker \pi_{i_1'}$ and $\ker \pi_{i_2'}$ for some unique pair $(i_1',i_2')$ of distinct indices in $\{1,\ldots,4\}$. This forces $v_{i+1}'$ to belong to the intersection $(V_i' + \ker d \pi_{i_1'}) \cap (V_i' + \ker d \pi_{i_2'})$, which, by the computations above, means that modulo $V_i'$, the vector $v_{i+1}'$ must equal one of $e_1$,$e_2$,$e_3$, and $e_4$. This means that $V_{i+1}'$ is also the span of some subcollection of $e_1,e_2,e_3,e_4$. But by induction, there is, however, a contradiction for the space $V_5'$, since it must have dimension $5$ but is also contained in a space of dimension $4$, namely the span of $\{e_1,e_2,e_3,e_4\}$. Thus no valid presentation can have a graph $\graph$ with the asserted structure.

\section{Valid presentations imply finiteness}

Here we establish that the existence of a valid presentations implies \eqref{brascamplieb} for some $C < \infty$. This is accomplished by establishing basic properties of valid presentations and then associating a factorization to each valid presentation which is shown via elementary means to imply \eqref{brascamplieb}.

\subsection{Properties of graph decompositions and balanced weights}

The most important consequence of the definition of a graph decomposition is that every edge $e \in \edges(\graph)$ belongs to some maximal directed chain of edges which originates at $\{0\}$ and ends at $H$. (A \textit{chain of edges} is an ordered collection of edges $e_1,\ldots,e_k$ such that the vertex to which $e_i$ is incoming is the same vertex from which $e_{i+1}$ is outgoing.) This is because the dimension of vertices is strictly monotone along any directed chain and extension of a chain is possible whenever the endpoints of the chain are not $\{0\}$ and $H$. The proposition below demonstrates that balanced weights can be appropriately understood as linear combinations of weights which are constants on chains. 

\begin{proposition}
Suppose $\graph$ is a graph decomposition of $H \neq \{0\}$ and $\varphi$ is some balanced weight on the edges of $\graph$ with values in $[0,\infty)^N$ for some $N$. Then for some $k \geq 1$, there exist real-valued weight functions $\varphi_i$, $i=1,\ldots,k$, each of which is one on some maximal directed chain in $\graph$ and zero elsewhere, and values $c_i \in [0,\infty)^N$, $i=1,\ldots,k$, such that
 \label{propdecomp}
\begin{equation} \varphi = \sum_{i=1}^k c_i \varphi_i. \label{convex} \end{equation}
Additionally, the total mass $\tau$ of $\varphi$ equals $\sum_{i=1}^{k} c_i$, which further implies that the sum of weights of outgoing edges from $\{0\}$ equals the sum of weights incoming to $H$.  
\end{proposition}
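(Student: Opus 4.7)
My plan is to reduce to the scalar case $N=1$ and then argue by induction on the number of edges where $\varphi$ is strictly positive. Since $\varphi$ takes values in $[0,\infty)^N$, write $\varphi = \sum_{j=1}^N e_j \varphi^{(j)}$, where $e_j$ is the $j$-th standard basis vector of $\R^N$ and $\varphi^{(j)}$ is the $j$-th component of $\varphi$. Each $\varphi^{(j)}$ is a nonnegative scalar balanced weight, and if one can decompose each as $\varphi^{(j)} = \sum_i c_i^{(j)} \varphi_i^{(j)}$ with $c_i^{(j)} \geq 0$ and $\varphi_i^{(j)}$ a maximal-chain indicator, then concatenating these lists (with each scalar coefficient $c_i^{(j)}$ promoted to the vector coefficient $c_i^{(j)} e_j \in [0,\infty)^N$) produces the required decomposition of $\varphi$. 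Thus the nontrivial content lies in the scalar case.

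For the scalar case, if $\varphi \equiv 0$, take $k=1$, $c_1 = 0$, and let $\varphi_1$ be the indicator of any maximal chain in $\graph$, which exists because $H \neq \{0\}$ forces $\edges(\graph)$ to be nonempty and every edge extends (by the definition of graph decomposition) to a maximal directed chain from $\{0\}$ to $H$. Otherwise, fix any edge $e_0$ with $\varphi(e_0) > 0$. The key subclaim is that $e_0$ can be extended to a maximal directed chain $C$ on which $\varphi$ is positive edge by edge. For the forward extension, at the head $V_2$ of the current last edge (as long as $V_2 \neq H$), the balanced condition says the sum of $\varphi$ over outgoing edges equals the sum over incoming edges, and the latter is already at least $\varphi(e_0) > 0$, so some outgoing edge at $V_2$ carries positive weight and may be appended. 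The backward extension is symmetric, using the balanced condition at the tail $V_1$ whenever $V_1 \neq \{0\}$. Strict monotonicity of $\dim$ along directed edges guarantees that both processes terminate after finitely many steps, necessarily at $H$ and $\{0\}$ respectively, since those are the only vertices at which the extension argument can fail.

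Having produced such a chain $C$, set $m := \min_{e \in C} \varphi(e) > 0$. The scalar indicator $\ind_C$ is itself balanced, because at every interior vertex of $C$ exactly one $C$-edge enters and exactly one leaves while vertices off $C$ are unaffected. Therefore $\varphi - m \ind_C$ is nonnegative, balanced, and has strictly fewer positive edges than $\varphi$ (the edge attaining the minimum is annihilated). Applying the inductive hypothesis to $\varphi - m \ind_C$ and adjoining the pair $(m, \ind_C)$ completes the decomposition \eqref{convex}. For the total mass statement, note that each $\varphi_i$ takes the value $1$ on exactly one edge outgoing from $\{0\}$ (the first edge of its maximal chain) and on exactly one edge incoming to $H$ (the last edge), so summing \eqref{convex} over the edges outgoing from $\{0\}$ yields $\tau = \sum_{i=1}^k c_i$, and the identical sum is obtained by summing over edges incoming to $H$, which gives the final equality. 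The main conceptual step is the balance-based extension argument producing a chain on which the scalar weight is uniformly positive; once that is in place, the remainder is straightforward bookkeeping.
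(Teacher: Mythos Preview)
Your proof is correct and follows essentially the same strategy as the paper's: both use the balance condition to extend an edge of positive weight to a full maximal chain on which the weight stays positive, subtract off a multiple of that chain's indicator, and iterate until nothing remains. The only difference is organizational---you reduce to the scalar case first and induct on the number of positive-weight edges, whereas the paper works directly with the vector-valued $\varphi$ and peels off one coordinate direction $\delta e_j$ at a time; one small slip to fix is that in your forward-extension step the incoming sum at $V_2$ is bounded below by the weight of the \emph{current} last edge rather than by $\varphi(e_0)$, but the conclusion is unaffected.
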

\begin{proof}
First observe that any function $\varphi_i$ which is an indicator function of a maximal directed chain must itself be balanced with total mass one, since the weight of outgoing edges from $\{0\}$ is one, and at any vertex not equal to $\{0\}$ or $H$, either there are no incoming or outgoing vertices with nonzero weight, or there is exactly one incoming and one outgoing of nonzero weight, both with weight $1$.

If $\varphi$ is identically zero, there is nothing to prove (simply take $k=1$, fix $\varphi_1$ to be the indicator function of any maximal chain, and set $c_1 = 0$).  If $\varphi$ is not identically zero, there must be some edge $e$ on which $\varphi(e) \neq 0$, meaning that there is some $j$ such that the $j$-th component of $\varphi(e)$ is strictly positive. Let $e$ and $j$ be chosen minimally, i.e., the $j$-th component of $\varphi(e)$ has some positive value $\delta$ with the property that every positive component of $\varphi(e')$ for every edge $e'$ is at least $\delta$. Now let $C$ be any maximal chain of edges in $\graph$ passing through $e$ which has the property that the $j$-th component of $\varphi(e')$ is nonvanishing for all $e' \in C$. Because the $j$-th component of $\varphi$ must be balanced as a scalar-valued weight function on the edges
 and because it is never negative, any vertex other than $\{0\}$ and $H$ with an incoming (or outgoing) edge with positive $j$-th component must have a corresponding outgoing (or incoming) edge with positive $j$-th component. Thus the chain $C$ must begin at $\{0\}$ and end at $H$. Moreover, the difference $\varphi - c \varphi_C$, where $\varphi_C$ is the indicator function of $C$ and $c \in [0,\infty)^N$ equals $\delta$ in position $j$ and zero in all remaining positions, is 
 balanced, takes values in $[0,\infty)^N$ (because minimality of $\delta$ ensures that no component of $\varphi - c \varphi_C$ is ever negative), and vanishes on the $j$-th component of $e$ while $\varphi$ itself did not. If the difference $\varphi - c \varphi_C$ is not identically zero, one can iterate this process, with a guarantee each time that the sum over $e \in \edges(\graph)$ of the number of vanishing components of $\varphi(e)$ is strictly increasing at each step. This means that by some point $k$, one has constructed $\varphi_1,\ldots,\varphi_k$ and $c_1,\ldots,c_k \in [0,\infty)^N$ such that
\[ \varphi - \sum_{i=1}^k c_i \varphi_i \]
vanishes identically, as promised by \eqref{convex}. The total mass of $\sum_{i=1}^k c_i \varphi_i$ is simply $\sum_{i=1}^k c_i$ (because total mass is linear and the total mass of each $\varphi_i$ is $1$), so this must equal the total mass $\tau$ of $\varphi$. Likewise, the sum of each $\varphi_i$ over all edges outgoing from $\{0\}$ equals the sum over all edges incoming to $H$. 
\end{proof}

The next fundamental observation concerns projecting graph decompositions.
Given a linear map $\pi : H \rightarrow W$, any graph decomposition $\graph$ of $H$ induces a graph decomposition of $\pi(H)$, which will be denoted by $\pi(\graph)$ and is constructed in the following way:
\begin{enumerate}
\item The vertices $\verts(\pi(\graph))$ are exactly those subspaces $V$ of $\pi(H)$ having the form $V = \pi(V')$ for some $V' \in \verts(\graph)$.
\item Two unequal vertices $V_1,V_2 \in \verts(\pi(\graph))$ are joined by a directed edge if and only if there is an edge $e' \in \edges(\graph)$ joining $V_1'$ to $V_2'$ such that $\pi(V_1') = V_1$ and $\pi(V_2') = V_2$. In such a case, we say that $\pi(e') = e$.
\end{enumerate}
These rules fully specify the vertices and edges of $\pi(\graph)$. There are three main parts to the argument that $\pi(\graph)$ is a graph decomposition of $\pi(H)$. The first is to note that $\{0\}$ and $\pi(H)$ are vertices in $\pi(\graph)$ by virtue of being projections of $\{0\}$ and $H$ via $\pi$. Next, we establish that vertices joined by an edge satisfy the appropriate containment and dimensional relationships.
If $e$ is an edge outgoing from $V_1 \in \verts(\pi(\graph))$ and incoming to $V_2$, then by definition of $\edges(\pi(\graph))$, $V_1 \neq V_2$. Moreover, there is some edge $e' \in \edges(\graph)$ such that $e'$ is outgoing from $V_1'$ and incoming to $V_2'$ such that $\pi(V_1') = V_1$ and $\pi(V_2') = V_2$. Because $V_1'$ is a proper subspace of $V_2'$, $V_1$ must be a subspace of $V_2$ (and must be proper because $V_1 \neq V_2$). Because $V_2'$ is the sum of $V_1'$ and some one-dimensional subspace $X$, $V_2$ is the sum of $V_1$ and $\pi(X)$, meaning $\dim V_2 \leq \dim V_1 + 1$. These facts combined imply that $\dim V_2 = \dim V_1 + 1$.

Lastly, we show that every vertex other than $\{0\}$ has an incoming edge and every vertex other than $\pi(H)$ has an outgoing edge. To accomplish this, we first prove a useful auxiliary result. Suppose $e_1,\ldots,e_k$ is some directed chain of edges from $\{0\}$ to $H$ in $\graph$. Defining $e_{i_1},\ldots,e_{i_\ell}$ for $i_1 < \cdots < i_\ell$ to be all those edges among $e_1,\ldots,e_k$ whose endpoints project to distinct subspaces of $\pi(H)$, we claim that $\pi(e_{i_1}),\ldots,\pi(e_{i_\ell})$ is a directed chain in the graph $\pi(\graph)$ which begins at $\{0\}$ and ends at $\pi(H)$.
For any pair $e_{i_j}$ and $e_{i_{j+1}}$ in this list, if $i_{j+1} \neq i_j + 1$, then all edges $e_{i'}$ between $e_{i_j}$ and $e_{i_{j+1}}$ have endpoints which project to the same subspace via $\pi$. By transitivity, this means that the vertex to which $\pi(e_{i_j})$ is incoming must be the same vertex from which $\pi(e_{i_{j+1}})$ is outgoing. Thus $\pi(e_{i_1}),\ldots,\pi(e_{i_\ell})$ is a directed chain of edges in $\pi(\graph)$. Moreover, if $i_1 \neq 1$, endpoints of $\pi(e_{i'})$ both project to the same subspace of $\pi(H)$ via $\pi$ when $i' < i_1$, meaning by transitivity that they all project to $\{0\}$ and so $\pi(e_{i_1})$ must be outgoing from $\{0\}$. Similarly, $\pi(e_{\ell})$ is incoming to $\pi(H)$, since any edge $e_{i'}$ with $i' > i_\ell$ must have both endpoints projecting to $\pi(H)$. 

Returning to the issue of vertices and incoming and outgoing edges in $\pi(\graph)$, let $V$ be any vertex in $\pi(\graph)$. This $V$ has the form $\pi(V')$ for some $V' \in \verts(\graph)$. Assuming $H \neq \{0\}$ (as there would be nothing to prove in this case), $V'$ has either an incoming or outgoing edge, which may then be extended to a maximal directed chain in $\graph$. Projecting this chain back down to $\pi(\graph)$ in the manner just described gives a directed chain beginning at $\{0\}$, passing through $V$, and terminating at $\pi(H)$. In particular, every $V \neq \{0\}$ must have an incoming edge, and every $V \neq \pi(H)$ must have an outgoing edge.

When $\varphi$ is a scalar- or vector-valued function on $\edges(\graph)$, we define $\pi(\varphi)$ on $\edges(\pi(\graph))$ by taking
\[ \pi(\varphi)(e) := \sum_{\substack{e' \in \edges(\graph) \\ \pi(e') = e}} \varphi(e'), \]
i.e., by summing $\varphi$ over all edges which project via $\pi$ to the edge $e$. The following proposition establishes that balanced weights project to balanced weights.
\begin{proposition}
If $\graph$ is a graph decomposition of $H$ and $\varphi$ are balanced edge weights on $\graph$ with total mass $\tau$, then for any linear map $\pi : H \rightarrow W$, the projected weights $\pi(\varphi)$ on the graph decomposition $\pi(\graph)$ of $\pi(H)$ are also balanced with total mass $\tau$. \label{propproject}
\end{proposition}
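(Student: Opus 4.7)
The plan is to reduce the claim to the case where $\varphi$ is the indicator function of a single maximal directed chain in $\graph$, and then to verify that case by appealing to the chain-projection auxiliary result established just before the statement of Proposition~\ref{propproject}.

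First, I would use Proposition~\ref{propdecomp} to write $\varphi = \sum_{i=1}^k c_i \varphi_i$, where each $\varphi_i = \ind_{C_i}$ is the indicator function of a maximal directed chain $C_i$ in $\graph$ from $\{0\}$ to $H$ and the $c_i \in [0,\infty)^N$ satisfy $\sum_i c_i = \tau$; the degenerate case $H = \{0\}$ is trivial. The projection map $\varphi \mapsto \pi(\varphi)$ is linear by definition, so $\pi(\varphi) = \sum_i c_i \pi(\varphi_i)$, and it suffices to show that for every maximal chain $C$ in $\graph$, $\pi(\ind_C)$ is balanced on $\pi(\graph)$ with total mass one. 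Summing then yields that $\pi(\varphi)$ is balanced with total mass $\sum_i c_i = \tau$.

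Second, for a given maximal chain $C = (e_1,\ldots,e_m)$, the auxiliary result tells us that the projected edges $\pi(e_{i_1}),\ldots,\pi(e_{i_\ell})$ (those whose underlying chain edges have endpoints projecting to unequal subspaces) form a maximal directed chain in $\pi(\graph)$ from $\{0\}$ to $\pi(H)$. The heart of the matter is to identify $\pi(\ind_C)$ with the indicator function of this projected chain, which amounts to checking that these edges are pairwise distinct in $\pi(\graph)$. This follows because along the projected chain each successive target vertex has dimension strictly one greater than the previous, so the target vertices take the distinct dimensions $1,2,\ldots,\ell$ and correspond to distinct vertices in $\pi(\graph)$. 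The indicator function of a maximal chain in $\pi(\graph)$ is automatically balanced with total mass one by the observation at the start of the proof of Proposition~\ref{propdecomp}, completing the argument.

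The main obstacle is the pairwise-distinctness verification in the second paragraph; once that is in hand, the remainder amounts to combining linearity of the projection operation with the previously established auxiliary fact.
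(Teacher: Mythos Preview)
Your proposal is correct and follows essentially the same approach as the paper: reduce via Proposition~\ref{propdecomp} and linearity of $\pi(\cdot)$ to the case of a single maximal chain, then use the chain-projection auxiliary result together with the dimension argument to conclude that $\pi(\ind_C)$ is the indicator of a maximal chain in $\pi(\graph)$, hence balanced with total mass one. The only cosmetic difference is the order of presentation (the paper handles the single-chain case first and then invokes Proposition~\ref{propdecomp}), and your distinctness justification via the strictly increasing target dimensions is exactly the paper's ``dimension of vertices is nondecreasing'' observation made precise.
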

\begin{proof}
Consider the case when $\varphi$ is one on some maximal directed chain in $\graph$ and zero elsewhere. As observed above, the edges $e'$ in this chain project via $\pi$ to a chain in $\pi(\graph)$. In this case, $\pi(\varphi)$ will be one on the elements of the projected chain and zero elsewhere since no two edges in the original chain can project nontrivially to the same edge in $\pi(\graph)$ (along the projected edges, the dimension of vertices is nondecreasing). Because $\pi(\varphi)$ is supported on and constant on a maximal chain from $\{0\}$ to $\pi(H)$, it is balanced and one can explicitly see that its total mass is $1$. 

Now by Proposition \ref{propdecomp}, any balanced weights can be written as a linear combination of weights on maximal chains. Each indicator function of a maximal chain projects to a balanced weight on $\pi(\graph)$ with the same mass, so by linearity of projection $\pi(\varphi)$, the proposition must follow.
\end{proof}

\subsection{Factorizations}

Given a graph decomposition $\graph$ of a finite-dimensional real Hilbert space $H$, an edge $e \in \edges(\graph)$ outgoing from $V_1$ and incoming to $V_2$, and a nonnegative Borel-measurable function $f$ on $H$, we will say that $f$ is a \textit{normalized edge function} for $e$ when two conditions hold. First, for every $v \in V_1$, it must be the case that $f(x + v) = f(x)$ for all $x \in H$. Second, for every unit vector $w \in V_2 \cap V_1^\perp$, it must be the case that
\[ \int_{\R} f(x + tw) dt \leq 1 \]
for every $x \in H$. (Note that $w$ is actually unique up to a choice of sign.) Normalized edge functions arise naturally in the theory of factorization for multilinear duality developed by Carbery H\"{a}nninen, and Valdimarsson \cite{chv2023}; one can find them, for example, in \cite{chv2023}*{Section 10.2.1}, along with a brief footnote explaining why functions of this sort are essentially unique for factorization purposes. As tools for proving inequalities, the chief obstacle is not the form these functions take, but the identification of appropriate subspaces to which these functions should be adapted. The structure encoded in a valid presentation generalizes Barthe's concept of adapted partitions \cite{barthe1998} and gives precisely information needed to build a factorization of general measurable functions using normalized edge functions. The first basic building block is the following proposition.

\begin{proposition} \label{factorprop} Suppose $\graph$ is a graph decomposition of $H$. For each nonnegative Lebesgue-measurable function $f$ on $H$, there exist nonnegative Borel-measurable functions $f_e$ for each $e \in \edges(\graph)$ such that each $f_e$ is a normalized edge function for $e$. These functions $f_e$ have the property that for any real, nonnegative balanced edge weights $\varphi$  on $\graph$ with total mass $\tau$, 
\begin{equation} [f(x)]^\tau = ||f||_1^{\tau} \prod_{e \in \edges(\graph)} [f_e(x)]^{\varphi(e)} \label{factorize} \end{equation}
for almost every $x \in H$. Here $||f||_1$ denotes the integral of $f$.
\end{proposition}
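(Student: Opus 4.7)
My approach is to define all of the $f_e$ directly from fiber integrals of $f$ indexed by the vertices, and then verify the factorization first for maximal chains and extend to arbitrary balanced weights via Proposition \ref{propdecomp}. For each vertex $V \in \verts(\graph)$, define
\[ g_V(x) := \int_{V} f(x+v)\, dv, \]
where $dv$ denotes Lebesgue measure on the subspace $V$, with the convention $g_{\{0\}} = f$. By Fubini--Tonelli, $g_V$ is Borel-measurable and invariant under translations by $V$; moreover, $g_H(x) \equiv \|f\|_1$. For each edge $e \in \edges(\graph)$ outgoing from $V_1$ and incoming to $V_2$, I set
\[ f_e(x) := \frac{g_{V_1}(x)}{g_{V_2}(x)}, \]
with the convention that $f_e(x) := 0$ whenever $g_{V_2}(x) = 0$. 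Since $g_{V_2}(x) = 0$ forces $g_{V_1}(x) = 0$ for almost every $x$ in the $V_2$-coset (again by Fubini), this convention does no harm almost everywhere.

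Next I verify that $f_e$ is a normalized edge function. Translation invariance under $V_1$ is immediate since both $g_{V_1}$ and $g_{V_2}$ are $V_2$-invariant hence certainly $V_1$-invariant. For the unit vector $w \in V_2 \cap V_1^\perp$, observe that $g_{V_2}(x+tw) = g_{V_2}(x)$ since $w \in V_2$, and that Lebesgue measure on $V_2$ factors orthogonally as the product of Lebesgue measure on $V_1$ and on $\R w$, so that
\[ \int_\R g_{V_1}(x+tw)\, dt = \int_\R \int_{V_1} f(x+tw+v)\, dv\, dt = g_{V_2}(x). \]
Dividing, we obtain $\int_\R f_e(x+tw)\, dt = 1$ whenever $g_{V_2}(x) \neq 0$, and $0$ otherwise; in either case the required bound of $1$ holds.

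Now I establish the factorization, first in the special case where $\varphi$ is the indicator function of a maximal chain $C$ of edges $e_1,\ldots,e_n$ passing through vertices $\{0\} = V_0 \subset V_1 \subset \cdots \subset V_n = H$. In this case, the product telescopes:
\[ \prod_{e \in \edges(\graph)} [f_e(x)]^{\varphi(e)} = \prod_{i=1}^n \frac{g_{V_{i-1}}(x)}{g_{V_i}(x)} = \frac{g_{V_0}(x)}{g_{V_n}(x)} = \frac{f(x)}{\|f\|_1}, \]
for almost every $x$, which is the desired identity with $\tau = 1$. For an arbitrary nonnegative balanced weight $\varphi$ with total mass $\tau$, Proposition \ref{propdecomp} (applied with $N=1$) gives a decomposition $\varphi = \sum_{j=1}^k c_j \varphi_j$ where each $\varphi_j$ is the indicator of a maximal chain $C_j$ and $\sum_j c_j = \tau$. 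Taking the product of the chain identity to the power $c_j$ and multiplying over $j$ yields
\[ \prod_{e} [f_e(x)]^{\varphi(e)} = \prod_j \left(\frac{f(x)}{\|f\|_1}\right)^{c_j} = \left( \frac{f(x)}{\|f\|_1}\right)^{\tau}, \]
which rearranges to \eqref{factorize}.

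The main obstacle is bookkeeping around the null sets where $g_V$ vanishes; once one commits to the convention that a zero denominator produces $f_e = 0$, the telescoping identity holds pointwise wherever all $g_{V_i}$ are positive and both sides vanish elsewhere (since $f = 0$ a.e.\ on $\{g_H = 0\}$ is vacuous, but $f(x) = 0$ a.e.\ on $\{g_{V_i}(x) = 0\}$ for any $V_i$ by Fubini). Apart from this, every step is a direct computation using Fubini and the decomposition supplied by Proposition \ref{propdecomp}.
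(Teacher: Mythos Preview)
Your proof is correct and follows essentially the same construction as the paper: the functions $f_e$ are defined as ratios $g_{V_1}/g_{V_2}$ of fiber integrals (the paper calls these $f_{V_1}/f_{V_2}$), the normalized edge function property is verified identically, and the null-set bookkeeping via Fubini matches. There is one small slip: you write that ``both $g_{V_1}$ and $g_{V_2}$ are $V_2$-invariant,'' but $g_{V_1}$ is only $V_1$-invariant; the conclusion is unaffected since $g_{V_2}$ is $V_2$-invariant (hence $V_1$-invariant) and $g_{V_1}$ is $V_1$-invariant, so the quotient is $V_1$-invariant. The only substantive difference is in the final step: the paper regroups the product $\prod_e [f_e]^{\varphi(e)}$ by vertex and uses the balance condition directly to collapse exponents (yielding $[f]^\tau \|f\|_1^{-\tau}$ without invoking Proposition~\ref{propdecomp}), whereas you invoke Proposition~\ref{propdecomp} to reduce to chains and telescope there. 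Both arguments are short and equivalent; the paper's is marginally more self-contained, while yours makes the role of maximal chains more explicit.
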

\begin{proof}
Modifying $f$ on a set of measure zero in $H$ does not change the validity of the factorization \eqref{factorize}, and in so doing, one may assume without loss of generality that $f$ is Borel rather than Lebesgue measurable. This has the advantage that $f$ remains Borel measurable when restricted to any affine subspace of $H$.
Given any $d$-dimensional subspace $V$ of $H$ and any nonnegative Borel-measurable function $f$ on $H$, we define
\[ f_V(x) := \int_{\R^d} f(x + t_1 w_1 + \cdots + t_d w_d) dt_1 \cdots d t_d \]
for any choice of orthonormal basis $w_1,\ldots,w_d$ of $V$. When $V= \{0\}$, we simply let $f_{\{0\}} := f$. Note that $f_V$ must itself be Borel measurable. This function $f_V$ is invariant under translation by elements of $V$, and for any $d'$-dimensional subspace $W$ of $H$ which has $W \cap V = \{0\}$, the function $(f_V)_W$ equals  $c_{V,W} f_{V+W}$ for some constant $c_{V,W}$ depending only on $V$ and $W$. In particular, if $W + V = H$, then $(f_V)_W$ is constant and equal to a factor depending only on $V$ and $W$ times the integral of $f$.

If $e$ is an edge in a graph decomposition $\graph$, let $V_1$ be the vertex from which $e$ is outgoing and $V_2$ the vertex to which $e$ is incoming and define
\[ f_{e}(x) := \frac{f_{V_1}(x)}{f_{V_2}(x)} \chi_{f_{V_2}(x) > 0}. \]
The function $f_e$ has the property that it is invariant with respect to translations in $V_1$. If $w$ is an unit vector in $V_2 \cap V_1^\perp$, we have that
\begin{align*}
\int f_{e}(x + tw) dt & = \int_{\R} \frac{f_{V_1}(x + tw )}{f_{V_2}(x + tw)} \chi_{f_{V_2}(x + tw) > 0} dt \\
& = \int_{\R} \frac{f_{V_1}(x + tw )}{f_{V_2}(x)} \chi_{f_{V_2}(x) > 0} dt \\
& = \frac{\chi_{f_{V_2}(x) > 0}}{{f_{V_2}(x)}} \int_{\R} {f_{V_1}(x + tw)}  dt \\ & = \chi_{f_{V_2}(x) > 0} \frac{f_{V_2}(x)}{f_{V_2}(x)} \leq 1
\end{align*}
for all $x \in H$.
If $\varphi$ is any nonnegative balanced function of edges in $\graph$ having total mass $\tau$, then $f$ has a factorization
\[ [f(x)]^\tau = ||f||_{1}^{\tau} \prod_{e \in \edges(\graph)} [f_e(x)]^{\varphi(e)} \]
which holds for almost every $x$, where $f_e$ is exactly as just defined. For definiteness, note that we interpret $[f_e(x)]^{\varphi(e)}$ to be identically one if $\varphi(e) = 0$. (Also note that if $||f||_1 = 0$, we can simply define $f_e$ to be identically zero for each $e$). The reason this factorization holds is simply that the definition of $f_e$ yields the identity
\begin{align*}
\prod_{e \in \edges(\graph)} & \left( f_e(x) \right)^{\varphi(e)} = \\ & \!\!\!\!\! \prod_{V \in \verts(\graph)} \left( \prod_{e \in \edgeout(V)} [f_V(x)]^{\varphi(e)} \right) \left( \prod_{e \in \edgein(V)} [f_V(x)]^{-\varphi(e)} (\chi_{f_V(x) > 0})^{\varphi(e)} \right),
\end{align*}
where $\edgeout(V)$ denotes edges outgoing from $V$ (i.e., edges for which $V$ has smaller dimension than the space joined to it by $e$) and $\edgein(V)$ are the edges incoming to $V$. Thus if we let
\[ I(x) := \prod_{V \in \verts(\graph)} \prod_{e \in \edgein(V)} (\chi_{f_V(x)>0})^{\varphi(e)}, \]
we have that
\begin{align*} \prod_{e \in \edges(\graph)} [f_e(x)]^{\varphi(e)} & = I(x) \prod_{V \in \verts(\graph)} [f_V(x)]^{\sum_{e \in \edgeout(V)} \varphi(e) - \sum_{e \in \edgein(V)} \varphi(e)} \\ & = I(x) [f(x)]^{\tau} ||f||_{1}^{-\tau}. \end{align*}
Aside from the function $I(x)$, this is exactly the desired factorization. To see why $I(x)$ can be ignored almost everywhere, first note that it equals $1$ or $0$ everywhere, so one need only consider points $x$ at which it vanishes. For fixed $V$, let $E$ be the set of points at which $f(x)$ is nonvanishing and $f_V(x)$ vanishes.  The function $(f \chi_E)_V$ is identically zero because it is dominated by $f_V(x)$, which is constant on translates of $V$, and $E$ contains no translates of $V$ where the integral of $f$ is nonzero. Thus Fubini's Theorem guarantees that $f \chi_E$ vanishes almost everywhere. Therefore, aside from a null set, $f$ must vanish everywhere that $f_V$ vanishes. Taking products over $V \in \verts(\graph)$ establishes that $f(x)$ vanishes almost everywhere on the set where $I(x) = 0$, so the proof of the factorization is complete.
\end{proof}

The second main building block for a factorization argument is the estimation step, in which one bounds the norm of a function admitting a factorization. This is expressed in Proposition \ref{integralprop}.
\begin{proposition}
Suppose $F$ is a nonnegative measurable function on $H$ and that $\graph$ is a graph decomposition of $H$ with balanced edge weights $\varphi$ having total mass $1$. If $F$ admits a factorization
\[ F(x) = \prod_{e \in \edges(\graph)} [F_e(x)]^{\varphi(e)} \]
for a.e. $x \in H$, where each $F_e$ is a normalized edge function for $e$, then \label{integralprop}
\[ \int_{H} F(x) dx \leq 1. \]
\end{proposition}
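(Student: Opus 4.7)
The plan is to reduce the estimate, via Hölder's inequality combined with Proposition \ref{propdecomp}, to the case in which $\varphi$ is the indicator function of a single maximal directed chain, and then to dispatch that case by iterated one-dimensional integration using the two defining properties of a normalized edge function.

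For the reduction, I would apply Proposition \ref{propdecomp} to the scalar-valued balanced weight $\varphi$ to obtain $\varphi = \sum_{i=1}^k c_i \varphi_i$, where each $\varphi_i$ is the indicator function of a maximal chain $C_i$ in $\graph$, each $c_i \geq 0$, and $\sum_{i=1}^k c_i$ equals the total mass $1$ of $\varphi$. Regrouping factors in the hypothesized factorization yields
\[ F(x) = \prod_{e \in \edges(\graph)} [F_e(x)]^{\sum_i c_i \varphi_i(e)} = \prod_{i=1}^k \left[\prod_{e \in C_i} F_e(x)\right]^{c_i}, \]
so because the $c_i$ are nonnegative and sum to $1$, generalized Hölder gives $\int_H F(x)\,dx \leq \prod_i \bigl(\int_H \prod_{e \in C_i} F_e(x)\,dx\bigr)^{c_i}$. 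It therefore suffices to prove $\int_H \prod_{e \in C} F_e(x)\,dx \leq 1$ for each single maximal chain $C$.

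For a chain $C$ with vertices $\{0\}=V_0\subset V_1\subset\cdots\subset V_n=H$ and edges $e_j$ joining $V_{j-1}$ to $V_j$, I would pick a unit vector $w_j \in V_j \cap V_{j-1}^\perp$ for each $j$. These furnish an orthonormal basis of $H$ with $V_{j-1}=\spn\{w_1,\ldots,w_{j-1}\}$. Writing $x=\sum_j s_j w_j$ and using Fubini, I would integrate in the order $ds_1,ds_2,\ldots,ds_n$. The translation-invariance clause in the normalized edge function condition forces $F_{e_j}$ to depend only on $s_j,\ldots,s_n$, so at the $j$-th step the factor $F_{e_j}$ is the only one still involving $s_j$, and the second clause of the normalized condition gives $\int_\R F_{e_j}(\,\cdot\, + s_j w_j)\,ds_j \leq 1$ pointwise in the remaining variables. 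Iterating this for $j=1,\ldots,n$ bounds the full integral by $1$.

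I do not anticipate any real obstacle: the only two bookkeeping items are confirming that $\sum_i c_i = 1$ exactly (guaranteed by the total mass assertion in Proposition \ref{propdecomp}) so the Hölder exponents $1/c_i$ are admissible, and noting that zero values of $c_i$ and trivial cases such as $\|F_e\|=0$ can be disposed of by the conventions established in Proposition \ref{factorprop}. The heart of the argument is the simple observation that a maximal chain supplies precisely the complete flag of subspaces needed to unwind the $H$-integral into a sequence of one-dimensional integrals, each of which is controlled by the normalization of a single $F_{e_j}$.
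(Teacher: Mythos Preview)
Your proposal is correct and follows essentially the same approach as the paper: decompose $\varphi$ into a convex combination of maximal-chain indicators via Proposition~\ref{propdecomp}, apply H\"older to reduce to a single chain, then use the flag structure of the chain together with Fubini and the two defining properties of normalized edge functions to integrate one variable at a time. The paper's argument is identical in structure and detail, differing only in notation.
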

\begin{proof}
By Proposition \ref{propdecomp},  $\varphi$ can be written as a linear combination
\[ \sum_{\ell=1}^k c_\ell \varphi_\ell \]
where $c_{\ell}$ are nonnegative real numbers summing to $1$ and each $\varphi_\ell$ is identically one on some maximal chain from $\{0\}$ to $H$ and zero elsewhere. If $\mathcal{C}_\ell$ is the collection of edges where $\varphi_\ell$ is supported, we have that 
\[ \prod_{e \in \edges(\graph)} [F_e(x)]^{\varphi(e)} = \prod_{\ell=1}^k \left[ \prod_{e \in \mathcal{C}_\ell} F_e(x) \right]^{c_\ell}. \]
Because the $c_\ell$ sum to $1$, we may apply H\"{o}lder's inequality:
\begin{equation} \int F(x) dx \leq \prod_{\ell=1}^k \left[ \int \prod_{e \in \mathcal{C}_\ell} F_e(x) dx \right]^{c_\ell}. \label{boundprod} \end{equation}
To prove the proposition, then, it suffices to assume that $\varphi$ is the indicator function of a maximal directed chain (and thereby show that each term in the product over $\ell$ on the right-hand side of \eqref{boundprod} is at most $1$).

To reiterate, it suffices to show that when $e_1,\ldots,e_m$ is a directed chain which begins at $\{0\}$ and ends at $H$ and $F_\ell$ is a normalized edge function for $e_\ell$ for each $\ell=1,\ldots,m$, then 
\[ \int_{H} \prod_{\ell=1}^m F_\ell(x) dx \leq 1. \]
 The proof is a simple consequence of Fubini's Theorem. Let $V_{\ell-1}$ be the vertex from which $e_\ell$ is outgoing and let $V_{\ell}$ be the vertex to which $e_\ell$ is incoming.  There is an orthonormal basis of $H$ such that $\{w_1,\ldots,w_\ell\}$ is a basis of $V_\ell$ for each $\ell$. This basis is unique, in fact, up to choice of $\pm$ sign for each $w_i$. Then write
\begin{align*}
\int \prod_{\ell=1}^m & F_{\ell}(x) dx  = \int \prod_{\ell=1}^m F_{\ell} (t_1 w_1 + \cdots + t_m w_m) dt_1 \cdots dt_m
\end{align*}
and observe that $F_\ell$ being constant on $V_{\ell-1}$ implies that
$F_\ell(t_1 w_1 + \cdots + t_m w_m ) = F_\ell(t_\ell w_\ell + \cdots + t_m w_m)$, i.e., it is independent of $t_1,\ldots,t_{\ell-1}$. Let $*_\ell$ denote the sum $t_{\ell+1} w_{\ell+1} + \cdots + t_m w_m$ when $\ell < m$ and let $*_m = 0$. The key feature of $*_\ell$ is that it does not depend on $t_{\ell'}$ when $\ell' \leq \ell$.  There is a triangular structure to the integral which can be iterated:
\begin{align*}
& \int \prod_{\ell=1}^m  F_{\ell}(x) dx  =  \int \prod_{\ell=1}^m F_{\ell} (t_\ell w_\ell + *_\ell) dt_1 \cdots d t_m \\
& = \int \left[ \int F_{1} (t_1 w_1 + *_1) dt_1 \right] \prod_{\ell=2}^m F_{\ell} (t_\ell w_\ell + *_\ell) dt_2 \cdots d t_m \\
& \leq \int \prod_{\ell=2}^m F_{\ell} (t_\ell w_\ell + *_\ell) dt_2 \cdots d t_m  \leq \cdots  \\
& \leq \int \left[ \int F_j (t_j w_j + *_j) dt_2 \right] \prod_{\ell=j+1}^{N} F_{\ell}(t_\ell w_\ell +*_\ell) d t_{j+1} \cdots d t_m \leq \cdots \leq 1. 
\end{align*}
This completes the proof of the proposition.
\end{proof}

\subsection{Finiteness of the constant}

We are now ready to prove the main direction of Theorem \ref{mainthm}, namely, that existence of valid presentations implies \eqref{brascamplieb} for some $C < \infty$.

Given linear maps $\pi_i : H \rightarrow H_i$ for $i=1,\ldots,N$, it may be assumed without loss of generality that $\dim \pi_i(H) > 0$ for each $i$, since otherwise $f_i^{\tau_i}(\pi_i(x))$ is simply a constant function which equals $(\int_{\pi_i(H)} f)^{\tau_i}$ when the measure on $\{0\}$ is simply the delta measure with mass $1$.  Suppose that there exists a valid presentation $(\graph,\{\theta_i\}_{i=1}^N)$ of the H\"{o}lder-Brascamp-Lieb data $(H,\{\pi_i\}_{i=1}^N,\{\tau_i\}_{i=1}^N)$.
For each $i=1,\ldots,N$, there is a graph decomposition $\pi_i(\graph)$ of $\pi_i (H)$ (and, by assumption, $\pi_i(H)$ is not trivial) and the projected weights $\pi_i(\theta_i)$ are, by Proposition \ref{propproject} balanced with total mass $\tau_i$ on $\pi_i(\graph)$.
For each $i=1,\ldots,N$ and each nonnegative Lebesgue-measurable function $f_i$ on $\pi_i(H)$, apply the factorization from Proposition \ref{factorprop} using $\pi_i(\graph)$ and $\pi_i(\theta_i)$:
\begin{align*}
 [f_i(u)]^{\tau_i} & = ||f_i||_1^{\tau_i} \prod_{e' \in \edges(\pi_i(\graph))} [f_{i,e'}(u)]^{\pi_i(\theta_i)(e')} \\ & = ||f_i||_1^{\tau_i} \prod_{e' \in \edges(\pi_i(\graph))} \prod_{\substack{e \in \edges(\graph) \\ \pi_i(e) = e'}} [f_{i,e'}(u)]^{\theta_i(e)} \\
& =  ||f_i||_1^{\tau_i}  \prod_{\substack{e \in \edges(\graph) \\ \pi_i(e) \text{ defined}}} [f_{i,\pi_i(e)}(u)]^{\theta_i(e)}
 \end{align*}
 almost everywhere, 
where $\pi_i(e)$ is understood to be defined exactly when vertices of $e$ project to distinct subspaces via $\pi_i$ and is undefined otherwise.
Then set $u = \pi_i(x)$ and take the product over $i$:
\begin{align*}
\prod_{i=1}^N [f_i(\pi_i(x))]^{\tau_i} & = \prod_{i=1}^N \left[ ||f_i||_{1}^{\tau_i} \prod_{\substack{e \in \edges(\graph) \\ \pi_i(e) \text{ defined}}} [f_{i,\pi_i(e)}(\pi_i(x))]^{\theta_i(e)} \right] \\
& = \left( \prod_{i'=1}^N ||f_{i'}||_{1}^{\tau_{i'}} \right) \prod_{e \in \edges(\graph)} \left[ \prod_{\substack{i=1,\ldots,N \\ \pi_i(e) \text{ defined}}} [f_{i,\pi_i(e)}(\pi_i(x))]^{\theta_i(e)} \right]
\end{align*}
for a.e. $x \in H$. When $e$ is an edge from $V_1$ to $V_2$ and $\pi_i(e)$ is defined, the function $f_{i,\pi_i(e)}(x)$ is constant along translates of $\pi_i(V_1)$. This means that $f_{i,\pi_i(e)}(\pi_i(x))$ is constant along translates of $V_1$. Likewise, if $w \in V_2 \cap V_1^\perp$ is a unit vector, then $\pi_i(w) = w_0 + c_i w_1$ for some $w_0 \in \pi_i(V_1)$ and some unit vector $w_1 \in \pi_i(V_2) \cap (\pi_i(V_1))^\perp$. It follows that
\[ \int f_{i,\pi_i(e)}(\pi_i( x + t w)) dt = \int f_{i,\pi_i(e)} (\pi_i(x) + t c_i w_1) dt \leq c_i^{-1} \]
for almost every $x \in H$. For emphasis: the constant $c_i$ equals the constant
\[ ||\pi_i \cdot e || := \sup_{\substack{w \in V_2 \\ ||w|| = 1}} || P_{\pi_i(V_1)}^{\perp} w ||, \]
where $P_{\pi_i(V_1)}^\perp$ is projection onto the orthogonal complement of $\pi_i(V_1)$. 

Because $||\pi_i \cdot e|| f_{i,\pi_i(e)} \circ \pi_i$ is nonnegative and Borel measurable, we have established that, for each $i$ such that $\pi_i(e)$ is defined, the function $||\pi_i \cdot e || f_{i, \pi_i(e)} \circ \pi_i$ is a normalized edge function for $e$.
It follows by H\"{o}lder's inequality that if we define summary weights $\sigma (e)$ to be the sum of $\theta_i(e)$ for all $i$ such that $\pi_i(e)$ is defined, then
\[ F_e(x) := \prod_{\substack{i=1,\ldots,N \\ \pi_i(e) \text{ defined}}}  [||\pi_i \cdot e || f_{i,\pi_i(e)} (\pi_i(x))]^{\theta_i(e)/\sigma(e)} \]
 is a normalized edge function for $e$ and we have a factorization
\begin{equation} \prod_{i=1}^n [f_i(\pi_i(x))]^{\tau_i} = C \left( \prod_{i=1}^N ||f_i||_1^{\tau_i} \right) \prod_{e \in \edges(\graph)} [F_e(x)]^{\sigma(e)} \label{almostdone} \end{equation}
for almost every $x$, where 
\begin{equation} C := \prod_{i=1}^N \prod_{\substack{e  \in \edges(\graph) \\ \pi_i(e) \text{ defined}}} ||\pi_i \cdot e||^{- \theta_i(e)}. \label{boundvalue} \end{equation}
 Applying Proposition \ref{integralprop} using the weights $\sigma$ on $\edges(\graph)$, which are balanced with total mass $1$, it follows that
\[ \int_{H} \prod_{i=1}^n [f_i(\pi_i(x))]^{\tau_i} dx \leq  C \prod_{i=1}^N \left( \int_{\pi_i(H)} f_i dx \right)^{\tau_i}  \]
for the same constant $C$ defined in \eqref{boundvalue}. This implies in particular that the H\"{o}lder-Brascamp-Lieb constant is finite and not larger than $C$.

\section{Existence of valid presentations}
\label{exist}

In this section, we prove that \eqref{brascamplieb} can hold with  finite constant $C$  only when there exists a valid presentation $(\graph,\{\theta_i\}_{i=1}^N)$. By the results of the previous section, the existence of such structures implies that \eqref{brascamplieb} holds. By the result of Bennet, Carbery, Christ, and Tao \cites{bcct2008,bcct2010}, we know that finiteness of the constant $C$ in \eqref{brascamplieb} implies that the exponents $\{\tau_i\}_{i=1}^N$ satisfy the inequalities \eqref{bcct1}
with equality when $V = H$. It therefore suffices to show that any tuple $(\tau_1,\ldots,\tau_N)$ satisfying the constraints \eqref{bcct1} and \eqref{bcct2} must admit a valid presentation $(\graph,\{\theta_i\}_{i=1}^N)$ for the data $(H,\{\pi_i\}_{i=1}^N,\{\tau_i\}_{i=1}^N)$.

The proof is by induction on dimension of $H$ and uses the same ideas employed both by both Bennet, Carbery, Christ, and Tao and Carbery, H\"{a}nninen, and Valdimarsson to show that \eqref{bcct1} and \eqref{bcct2} imply \eqref{brascamplieb}. When $\dim H = 1$, the inequality \eqref{brascamplieb} can hold only when 
\begin{equation} 1 = \sum_{i=1}^N \tau_i \dim \pi_i(H), \label{surprisetm} \end{equation}
In this case, it is straightforward to manufacture a valid presentation by hand. Let $\graph$ consist of a single edge from $\{0\}$ to $H$. Let $(\theta_1(e),\ldots,\theta_N(e))$ equal $(\tau_1,\ldots,\tau_N)$ on the single edge $e \in \edges(\graph)$.  Because there is only one edge, all weights are automatically balanced. Note that the summary weight $\sigma$ is exactly the sum of $\tau_i$ over those $i$ for which $\dim \pi_i(H) = 1$, so \eqref{surprisetm} guarantees that the total mass of the summary weight $\sigma$ is $1$. Therefore, when $\dim H =1$ and \eqref{brascamplieb} is true, a valid presentation exists.

In general, we note that valid presentations have a natural convexity property: If $\graph$ admits weights $\theta$ satisfying the hypotheses of Theorem \ref{mainthm} and $\graph'$ and $\theta'$ do as well for the same underlying projections, we may construct a new graph $\graph''$ whose vertex set and edge set are each simply the union of vertex sets and edge sets for $\graph$ and $\graph'$ respectively, and then build $\theta''$ such that $\theta''(e) = c \theta(e) + (1-c) \theta'(e)$, where we interpret $\theta(e)$ and $\theta'(e)$ as being zero when either is not defined for a particular edge $e$. Assuming $c \in [0,1]$, this $(\graph'',\{\theta''_i\}_{i=1}^N)$ is a valid presentation of $(H,\{\pi_i\}_{i=1}^N,\{c \tau_i + (1-c)\tau_i'\}_{i=1}^N)$: $c \theta_i + (1-c) \theta_i'$ is balanced and nonnegative with total mass $c \tau_i + (1-c ) \tau_i'$ for each $i$ and the summary weight $\sigma''$ equals $c \sigma + (1-c) \sigma'$, which is balanced and nonnegative with total mass $1$.

By this convexity of valid presentations, to show the existence of a valid presentation for all tuples $(\tau_1,\ldots,\tau_N)$ for which \eqref{brascamplieb} holds with finite constant, it suffices to assume that $(\tau_1,\ldots,\tau_N)$ is an extreme point of the compact, convex set in $[0,1]^N$ containing all tuples satisfying the constraints \eqref{bcct1} and \eqref{bcct2}. Also recall the implicit constraints $\tau_i \geq 0$ and $\tau_i \leq 1$ for each $i=1,\ldots,N$. Considering the collection \eqref{bcct1}, \eqref{bcct2} augmented by these additional constraints, at any extreme point $(\tau_1,\ldots,\tau_N)$, there must be at least $N$ linearly independent constraints among the whole collection which are satisfied as equalities. This is because when the number of linearly-independent constraints which are satisfied as equalities is less than $N$, there will necessarily be some nonzero direction $(u_1,\ldots,u_N) \in \R^N$ such that all equality constraints satisfied by $(\tau_1,\ldots,\tau_N)$ remain satisfied by  $(\tau_1 + s u_1,\ldots, \tau_N + s u_N)$ for all $s \in \R$. Since the remaining constraints are all strict inequalities, they remain valid for sufficiently small $s$, and this shows that $(\tau_1,\ldots,\tau_N)$ is not extremal after all (because we could interpolate these inequalities for small positive and negative values of $s$ to deduce the case $s=0$). So there must be some collection $S \subset \{1,\ldots,N\}$ such that $\tau_i \in \{0,1\}$ for all $i \in S$ and there must  be at least $N - \# S$ subspaces $V$ on which \eqref{bcct1} holds as an equality. Note that we know that $H$ itself is one such space. The subspaces $V \neq \{0\},H$ for which \eqref{bcct1} holds as equality are known as critical subspaces, a term first introduced by Carlen, Lieb, and Loss \cite{cll2004} in the special case where each $\pi_i$ is rank $1$.

If $\tau_i = 1$ for any $i$ for which $\dim \pi_i(H)  > 0$, then automatically there is a critical subspace: let $W$ be any codimension $1$ subspace in $\pi_i(H)$ and let $V := \pi_i^{-1}(W)$. We know that
\[ \dim H = \sum_{i=1}^N \tau_i \dim \pi_i(H) \]
and also that
\begin{align*}
\dim H -1 = \dim V & \leq \sum_{i=1}^N \tau_i \dim \pi_i(V) \\ & \leq -1 + \sum_{i=1}^N \tau_i \dim \pi_i(H) = -1 + \dim H \end{align*}
because $\dim \pi_i(V) = \dim W = \dim \pi_i(H) - 1$. Thus, $V$ is a critical subspace.

Next, if $\tau_i = 0$ for all $i$ which $\dim \pi_i(H) > 0$ or all but one such value of $i$, then the conditions \eqref{bcct1}, \eqref{bcct2} cannot be satisfied. When all $\tau_i$ are zero, \eqref{bcct2} states that $0 < \dim H \leq 0$, and if all but one of the terms $\tau_i \dim \pi_i(H)$ vanishes, then applying \eqref{bcct1} to the kernel $K$ of the only $\pi_i$ for which $\tau_i \dim \pi_i(H)  > 0$ gives
\[ \dim K \leq \sum_{i' \neq i} \tau_i \cdot \dim \pi_{i'}(K) + \tau_i \cdot 0 = 0. \]
This is a contradiction unless $\dim K = 0$. But when $\dim K = 0$, the tuple $(\tau_1',\ldots,\tau_N') := (0,\ldots,1,0,\ldots,0)$, where $\tau_i = 1$ for the index $i$ such that $\ker \pi_i = \{0\}$, satisfies \eqref{bcct1} as an equality for each $V$. But then we can write the exponents as a convex combination
\begin{align*} (\tau_1,\ldots,\tau_N) = \tau_i & (0,\ldots,0,1,0,\ldots,0) \\ & + (1-\tau_i) \left(\frac{\tau_1}{1-\tau_i},\ldots,\frac{\tau_{i-1}}{1-\tau_i},0,\frac{\tau_{i+1}}{1-\tau_i},\ldots,\frac{\tau_N}{1-\tau_i}\right) \end{align*}
of two different $N$-tuples satisfying \eqref{bcct1} and \eqref{bcct2}, contradicting extremality.
Therefore, at every extreme point which is not already known to admit a critical subspace $V$, there must be at least two indices $i,i'$ such that $\dim \pi_i(H) > 0$, $\dim \pi_{i'}(H) > 0$ and $0 < \tau_i,\tau_i' < 1$. This means at least two different inequalities from \eqref{bcct1} must be equalities, and even though one of them is presumably \eqref{bcct2}, every extreme point must nevertheless always admits a proper critical subspace $V$ for which the constraint \eqref{bcct1} it generates holds as an equality.

Critical subspaces are significant because H\"{o}lder-Brascamp-Lieb inequalities hold when integrating over $V$ instead of $H$ because
\[ \dim V' \leq \sum_{i=1}^n \tau_i \dim \pi_i(V') \]
for all subspaces $V'$ of $V$, with equality when $V' = V$.  Similarly, for subspace $V' \supset V$, we have
\begin{equation} \dim V' - \dim V \leq \sum_{i=1}^N \tau_i ( \dim \pi_i(V') - \dim \pi_i(V)) \label{subtracted} \end{equation}
with equality when $V' = H$. This second set of inequalities is also associated to a H\"{o}lder-Brascamp-Lieb problem, namely: we replace $H$ by $V^{\perp}$ (the orthogonal complement of $V$)  and $\pi_i$ by the composition $P_i \pi_i$, where $P_i$ is orthogonal projection onto $[\pi_i(V)]^{\perp}$. This follows because every subspace $W$ of $V^\perp$ satisfies
\[\dim W = \dim (W + V) - \dim V \]
and 
\[ \dim P_i \pi_i(W) = \dim P_i \pi_i(W+V) = \dim \pi_i(W+V) - \dim \pi_i(V). \]
The inequalities \eqref{subtracted} reduce to \eqref{bcct1} and \eqref{bcct2} for this system by simply fixing $V' = W + V$ for each subspace $W \subset V^\perp$.
So by induction on dimension of $H$, for any tuple $(\tau_1,\ldots,\tau_N)$ admitting a critical subspace $V$, there are graph decompositions $\graph,\graph^\perp$ of the vector spaces $V,V^\perp$ and weights $\theta_i,\theta^\perp_i$ which are valid presentations for their corresponding H\"{o}lder-Brascamp-Lieb data. From these two structures, we build a single graph $\graph^+$:
\begin{enumerate}
\item The vertices of $\graph^+$ are those subspaces of $H$ which are either elements of $\verts(\graph)$ or have the form $V + W$ for some $W \in \verts(\graph^{\perp})$.
\item Vertices $V_1,V_2 \in \verts(\graph^+)$ are joined by an edge $e$ if and only if dictated by one of the two graphs $\graph$, $\graph^\perp$, i.e., when $V_1,V_2 \in \graph$ and are joined by an edge there or when $V_1 = V + W_1$ and $V_2 = V + W_2$ for $W_1,W_2  \in \verts(\graph^{\perp})$ which are joined by an edge there.
\end{enumerate}
We note that every edge belongs to a chain which extends from $\{0\}$ to $H$ because as graphs, both $\graph$ and $\graph^\perp$ are subgraphs of $\graph$ with the terminal vertex $V$ of $\graph$ being identified with the starting vertex of $\graph^\perp$. The graph $\graph^+$ essentially just concatenates paths in $\graph$ and $\graph^\perp$. Thus $\graph^+$ is indeed a graph decomposition of $H$.
Now define edge weights $\theta^+$ on $\graph^+$ as follows:
\begin{enumerate}
\item On an edge $e$ joining $V_1,V_2 \in \verts(\graph^+)$ which are both also joined by an edge $e' \in \edges(\graph)$, define $\theta^+_i(e)$ on $\graph^+$ to equal $\theta_i(e')$ as it was defined on $\graph$ for the corresponding edge $e' \in \edges(\graph)$.
\item On an edge $e$ joining $V_1,V_2$ having the form $V_1 = V + W_1$ and $V_2 = V + W_2$ for $W_1$ and $W_2$ joined via $e^\perp$ in $\graph^{\perp}$, let $\theta^+_i(e) := \theta_i^{\perp}(e^\perp)$ for the corresponding edge $e^\perp \in \edges(\graph^\perp)$.
\end{enumerate}
To verify that $\theta_i^+$ is balanced on $\graph^+$ with total mass $\tau_i$, it suffices to check the balance condition at $V$, because $V$ is the only vertex in $\graph^+$ whose incoming and outgoing edges aren't isomorphic to and weighted identically to a corresponding vertex of either $\graph$ of $\graph^\perp$. At $V$, the sum of $\theta_i^+$ on all incoming edges is $\tau_i$ because $\theta_i$ is balanced on $\graph$ with total weight $\tau_i$ and all incoming edges to $V$ are derived from incoming edges to $V$ in $\graph$. Similarly, the sum of $\theta_i^+$ on all outgoing edges from $V$ is also $\tau_i$ because $\theta_i^\perp$ is balanced on $\graph^{\perp}$ with total mass $\tau_i$ and the outgoing edges from $V$ are all derived from outgoing edges of $\{0\}$ in $\graph^\perp$.

Finally, consider the summary weight $\sigma^+$. Let $V_1,V_2$ be vertices in $\graph^+$ joined by an edge $e$ which is derived from a corresponding edge in $\graph$. The indices $i$ included in the sum for the summary weight remain unchanged in passing from $\graph$ to $\graph^+$ because restricting the domain of each $\pi_i$ to $V$ has no impact on the question of whether $\pi_i(V_1) = \pi_i(V_2)$ since $V_1,V_2 \subset V$. Thus $\sigma^+$ agrees with $\sigma$ on the subgraph isomorphic to $\graph$. Now suppose $V_1$ and $V_2$ are vertices joined by an edge in $\graph^+$ which is derived from an edge in $\graph^{\perp}$. Here $V_1 = V + W_1$ and $V_2 = V + W_2$ for $W_1,W_2 \subset V^\perp$. It is similarly the case that $\pi_i(V_1) = \pi_i(V_2)$ if and only if $P_i \pi(W_1) = P_i \pi(W_2)$ since equality of $P_i \pi(W_1)$ and $P_i \pi(W_2)$ is identical to the question of equality of $\pi(W_1) + \pi(V)$ and $\pi(W_2) + \pi(V)$, which are exactly the spaces $\pi_i(V_1)$ and $\pi_i(V_2)$, respectively. Thus, $\sigma^+$ agrees with $\sigma^\perp$ on the portion of $\graph^+$ which is isomorphic to $\graph^\perp$. Just as was argued a moment ago, because both $\sigma$ and $\sigma^\perp$ are balanced on their respective graphs with total mass $1$, this means that $\sigma^+$ is also balanced with total mass $1$. Thus $(\graph^+,\{\theta_i^+\}_{i=1}^N)$ is a valid presentation of the data $(H,\{\pi_i\}_{i=1}^N, \{\tau_i\}_{i=1}^N)$.

\section{Closing Comments}
We conclude with a few observations about the proof.
\begin{enumerate}
\item The proof in Section \ref{exist} also yields an upper bound on the number of vertices needed for a graph decomposition.  Let $V_{N,m}$ be the maximum number of vertices which appear in a so-constructed valid presentation of H\"{o}lder-Brascamp-Lieb data with $N$ mappings $\pi_i$ on a Hilbert space $H$ of dimension $m$. We explicitly have that $V_{N,1} = 2$. If there is a critical subspace, the concatenation procedure yields a graph decomposition with at most $V_{N,m'} + V_{N,m-m'} - 1$ vertices for some $1 \leq m' < m$. For a general tuple $(\tau_1,\ldots,\tau_N)$, one must take a convex combination of valid presentations generated at extremal points. Without loss of generality, one may assume that at most $N+1$ extremal points are included in this convex combination. Thus one needs at most $(N+1) [V_{N,m'} + V_{N,m-m'} - 3] + 2$ vertices for a general point (where the extra $-2$ and $+2$ come from the fact that $\{0\}$ and $H$ are common to all graph decompositions). 
One can establish via these observations that there are at most $N^{-1}[(N+1)^{\dim H} - 1] + 1$ vertices in the graph decomposition $\graph$ generated by this construction. This bound seems unlikely to be sharp.
\item One can verify that the constant $C$ given by \eqref{boundvalue} is independent of the Hilbert space structure, meaning that changing the inner products on the spaces $H$ and $\pi_i(H)$ preserves the value of the constant $C$ provided that the volume of the unit ball is preserved (so that the induced integrals over $H$ and $\pi_i(H)$ have unchanged normalization). For each edge $e \in \edges(\graph)$, let $e_{-}$ be the vertex from which $e$ is outgoing and $e_{+}$ be the vertex into which it is incoming and choose some $w_e \in e_{+} \setminus e_{-}$. It follows from the definition of $||\pi_i \cdot e||$ that 
\[ ||\pi_i \cdot e|| = \frac{||P^\perp_{\pi_i(V_1)} \pi_i(w_e)||}{||P^\perp_{V_1} w_e||}. \]
Substituting this into \eqref{boundvalue} allows one to reduce to products over maximal chains using Proposition \ref{propdecomp} and the balance conditions (with a little extra care needed in cases when multiple edges $e \in \edges(\graph)$ happen to project to the same edge in one or more of the $\pi_i(\graph)$). On these chains, the products can be written in terms of determinants rather than norms, which explicitly manifests the claimed invariance property. This also suggests interesting potential connections between the expression for $C$ and invariant polynomials of the H\"{o}lder-Brascamp-Lieb data of the sort described in \cite{gressman2021} which are known abstractly to exist but are not currently known explicitly.
\item The explicit nature of the factorization \eqref{factorize} immediately hints at a number of generalizations. It appears very likely, for example, that a similar method of proof should be able to prove appropriately-structured mixed-norm H\"{o}lder-Brascamp-Lieb inequalities. In a different direction, one can see natural nonlinear variations where vertices of $\graph$ are foliations of the underlying space rather than vector spaces and edges $e \in \edges(\graph)$ join compatible foliations along which one could partially integrate. In cases where there is natural stability of the transversality of kernels (as, for example, in \cites{bb2010,bbg2013}), it should be possible to build useful foliations. In cases without such stability, as for example, the work of Bennett, Bez, Buschenhenke, Cowling, and Flock \cite{bbbcf2020}, it is far less clear whether such an approach would be feasible.
\end{enumerate}

\bibliography{mybib}

\end{document}